\documentclass[reqno]{amsart}
\usepackage[utf8]{inputenc}
\usepackage[a4paper]{geometry}

\usepackage[english]{babel}
\usepackage[document]{ragged2e}
\usepackage{amsfonts}
\usepackage{amsthm}
\usepackage{amssymb, amsmath}
\usepackage{hyperref}

\usepackage{xcolor}
\usepackage{cleveref}
\usepackage{enumitem}
\usepackage{graphicx}

\theoremstyle{plain}
\newtheorem{theo}{Theorem}[section]
\newtheorem{prop}[theo]{Proposition}

\newtheorem{coro}[theo]{Corollary}

\theoremstyle{definition}

\newcommand{\D}{\mathbb{D}}

\title[$\alpha$-Convex Functions]{A Characterization of $\alpha$-Convex Functions with Sharp Coefficient and Schwarzian Estimates}

\date{\today}

\author[V. Bravo]{V\'ictor Bravo}
\address{V\'ictor Bravo, Facultad de Ingenier\'ia y Ciencias, Universidad Adolfo Ib\'a\~nez, Av. Padre Hurtado 750, Vi\~na del Mar, Chile}
\email{victor.bravo.g@uai.cl}

\author[P. Carrasco]{Pablo Carrasco}
\address{Pablo Carrasco, Facultad de Ingenier\'ia y Ciencias, Universidad Adolfo Ib\'a\~nez, Av. Padre Hurtado 750, Vi\~na del Mar, Chile}
\email{pablo.carrasco.r@uai.cl}

\author[R. Hern\'andez]{Rodrigo Hern\'andez}
\address{Rodrigo Hern\'andez, Facultad de Ingenier\'ia y Ciencias, Universidad Adolfo Ib\'a\~nez, Av. Padre Hurtado 750, Vi\~na del Mar, Chile}
\email{rodrigo.hernandez@uai.cl}

\author[O. Venegas]{Osvaldo Venegas}
\address{Osvaldo Venegas, Departamento de Ciencias Matem\'aticas y F\'isicas, Facultad de Ingenier\'ia, Universidad Cat\'olica de Temuco, Av. Rudecindo Ortega 02950, Temuco, Chile}
\email{ovenegas@uct.cl}

\begin{document}

\maketitle

\justifying

\begin{abstract}
The class $M_\alpha$ of $\alpha$-convex functions, introduced by Mocanu in 1969, interpolates between starlike and convex functions. We prove a characterization of $M_\alpha$ that extends a theorem of Chuaqui, Duren, and Osgood from the convex case to the full class, and determine sharp values of $\beta$ for which $M_\alpha \subset C_\beta$ and $C_\beta \subset M_\alpha$. We also obtain a sharp Fekete--Szeg\H{o} inequality, bounds for the order and the Schwarzian norm, and an explicit formula for the Schwarzian norm of the $\alpha$-Koebe function for $\alpha = 1/n$, $n \in \mathbb{N}$, which we verify for $n \leq 9$ and conjecture to hold in general.
\end{abstract}
\vspace{0.3cm}
\textbf{Key words:} $\alpha$-convex functions, univalent functions, Schwarzian derivative. \\

\textbf{Mathematics subject classification:} 30C45, 30C55, 30C80. \\

\section{Introduction}\label{sec1}

The class $M_\alpha$ of \textit{$\alpha$-convex functions} was introduced by Mocanu \cite{Moc1969} in 1969 as a generalization of convex functions within the theory of univalent functions that interpolate continuously between starlike and convex mappings, controlled by the real parameter $\alpha$. A function $f(z) = z + a_2z^2 + \cdots$, analytic in the unit disk $\mathbb{D} = \{z : |z| < 1\}$, belongs to $M_\alpha$ if
\begin{equation}\label{eq:a-convex}
\operatorname{Re}\{J_\alpha[f](z)\} = \operatorname{Re}\left\{(1-\alpha)\frac{zf'(z)}{f(z)} + \alpha\left(1 + \frac{zf''(z)}{f'(z)}\right)\right\} > 0, \quad z \in \mathbb{D},
\end{equation}
where $\alpha \in \mathbb{R}$. In particular, $\alpha = 0$ recovers starlikeness and $\alpha = 1$ convexity. Mocanu proved that $M_\alpha \subseteq C$ for $\alpha \geq 1$, and Miller, Mocanu, and Reade \cite{MMR1978} proved that $M_\alpha \subset S^*$ for every $\alpha \in \mathbb{R}$, where $C$ and $S^*$ denote the classical classes of
convex and starlike functions. Moreover, for $\alpha \geq 1$ the same authors showed that $M_\alpha \subseteq S^*_\beta$, where
\[\beta = \frac{\Gamma\!\left(\frac{1}{2}+\frac{1}{\alpha}\right)}{\sqrt{\pi}\cdot
\Gamma\!\left(1+\frac{1}{\alpha}\right)},\]
and $S^*_\beta$ denotes the class of starlike functions of order $\beta$,
defined by
\[\operatorname{Re}\left\{J_0[f](z)\right\} = \operatorname{Re}\left\{\frac{zf'(z)}{f(z)}\right\} > \beta, \quad z \in \mathbb{D}.\]
The bound is sharp, with equality for the $\alpha$-Koebe function
\begin{equation}\label{koebeconvex}
k_\alpha(z) = \left[\frac{1}{\alpha}\int_0^z \xi^{1/\alpha-1}(1-\xi)^{-2/\alpha}\,
d\xi\right]^{\alpha}.
\end{equation}
In fact, the authors in \cite{MMR1974} show that $f\in M_\alpha$ ($\alpha>0$) if and only if there exists $g\in S^*$ such that
\begin{equation}\label{rep_int}
    f(z)=\left[\dfrac{1}{\alpha}\int_0^z\dfrac{(g(\xi))^{1/\alpha}}{\xi}\,d\xi\right]^\alpha.
\end{equation}
The previous example holds when $g(z)$ is the Koebe functions $k(z)=z/(1-z)^2$.

It is therefore natural to ask whether a more precise statement can be made on the convex side: for $\alpha \geq 1$, does every $f \in M_\alpha$ belong to $C_\beta$ for some $\beta > 0$? We shall prove that the answer is yes, with $\beta = \beta(\alpha) \in [0,1)$ explicit and sharp, where $C_\beta$ denotes the class of convex functions of order $\beta$, characterized
by
\[\operatorname{Re}\left\{J_1[f](z)\right\} = \operatorname{Re}\left\{1 +
\frac{zf''(z)}{f'(z)}\right\} > \beta, \quad z \in \mathbb{D}.\]
Propositions~\ref{prop_beta} and~\ref{prop_alpha} together make the reciprocal connection between $M_\alpha$ and $C_\beta$ precise.

The main result of this paper is the following characterization of $M_\alpha$, which
generalizes a theorem of Chuaqui, Duren, and Osgood \cite{CDO2011} to this setting.

\begin{theo}\label{thm:main}
Let $\alpha > 0$. Then $f \in M_\alpha$ if and only if
\begin{equation}\label{main}
\operatorname{Re}\{J_\alpha[f](z)\} \geq \frac{1}{4}(1 - |z|^2) \left|
(1-\alpha)\frac{zf'(z) - f(z)}{zf(z)} + \alpha\frac{f''(z)}{f'(z)}
\right|^2, \quad z \in \mathbb{D}.
\end{equation}
\end{theo}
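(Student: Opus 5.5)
The plan is to reduce the statement to the convex case, i.e.\ to the theorem of Chuaqui, Duren, and Osgood \cite{CDO2011}: a normalized locally univalent $h$ is convex if and only if
\[
\operatorname{Re}\left\{1+\frac{zh''(z)}{h'(z)}\right\}\ \geq\ \frac14(1-|z|^2)\left|\frac{h''(z)}{h'(z)}\right|^2,\qquad z\in\D .
\]
The key observation is that the bracket on the right-hand side of \eqref{main} is a logarithmic derivative. Indeed,
\[
(1-\alpha)\frac{zf'(z)-f(z)}{zf(z)}+\alpha\frac{f''(z)}{f'(z)}=\frac{d}{dz}\log\left[\left(\frac{f(z)}{z}\right)^{1-\alpha}\big(f'(z)\big)^{\alpha}\right].
\]
Moreover, multiplying this by $z$ and adding $1$ gives exactly $J_\alpha[f](z)$.

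\textbf{Step 1 (the auxiliary function).} Throughout, $f'(z)\neq0$ and $f(z)/z\neq0$ in $\D$; this is implicit in the definition of $J_\alpha[f]$ and is automatic for $f\in M_\alpha$, since then $f\in S^*$. Hence $\varphi(z)=(f(z)/z)^{1-\alpha}(f'(z))^{\alpha}$ has a well-defined analytic branch with $\varphi(0)=1$. Define
\[
h(z)=\int_0^z \left(\frac{f(\xi)}{\xi}\right)^{1-\alpha}\big(f'(\xi)\big)^{\alpha}\,d\xi .
\]
Then $h(0)=0$, $h'(0)=1$, and $h'=\varphi\neq0$, so $h$ is locally univalent.

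\textbf{Step 2 (transfer of the two sides).} Compute $h''/h'=\varphi'/\varphi$, which is exactly the bracket in \eqref{main}. Consequently
\[
1+\frac{zh''(z)}{h'(z)}=1+(1-\alpha)\left(\frac{zf'(z)}{f(z)}-1\right)+\alpha\frac{zf''(z)}{f'(z)}=J_\alpha[f](z).
\]
It follows that $J_1[h]=J_\alpha[f]$ identically. In particular, $f\in M_\alpha$ if and only if $h$ is convex. Furthermore, the inequality \eqref{main} for $f$ is literally the Chuaqui--Duren--Osgood inequality for $h$.

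\textbf{Step 3 (conclusion).} Applying the Chuaqui--Duren--Osgood theorem to $h$ yields both directions at once. If $f\in M_\alpha$, then $h$ is convex, so the inequality holds for $h$ and hence \eqref{main} holds. Conversely, \eqref{main} for $f$ gives the inequality for $h$, so $h$ is convex and therefore $f\in M_\alpha$.

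If one prefers not to quote the converse half of \cite{CDO2011}, it can be done directly. The inequality gives $\operatorname{Re}J_\alpha[f]\geq0$, and $J_\alpha[f](0)=1$. By the minimum principle for the harmonic function $\operatorname{Re}J_\alpha[f]$, which is nonconstant unless $J_\alpha[f]\equiv1$, we get strict positivity on $\D$.

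The only delicate point is Step 1: choosing consistent branches of the powers when $\alpha\notin\mathbb{Z}$, and justifying the nonvanishing of $f(z)/z$ in the converse direction. I would handle the latter by taking it as part of the standing hypotheses under which $J_\alpha[f]$ is defined on all of $\D$. Everything else is the direct computation in Step 2, where the choice of the factor $(f/z)^{1-\alpha}$ rather than $f^{1-\alpha}$ is what produces the term $-1$ needed to cancel the constant.
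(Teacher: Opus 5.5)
Your proposal is correct, but it takes a different route from the paper. The paper argues directly. It writes $J_\alpha[f]=(1+\varepsilon)/(1-\varepsilon)$ with $\varepsilon=z\varepsilon_0$ and $|\varepsilon_0|\le 1$, and solves $\varepsilon_0=E/(2+zE)$ for $E=(1-\alpha)\frac{zf'-f}{zf}+\alpha\frac{f''}{f'}$. It then expands $|E|^2\le|2+zE|^2$ using $zE=J_\alpha[f]-1$.

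You instead note that $E=h''/h'$ and $J_\alpha[f]=J_1[h]$ for $h'=(f/z)^{1-\alpha}(f')^{\alpha}$. Both identities are correct, and together they make the theorem a formal corollary of the Chuaqui--Duren--Osgood result. In effect, the paper's computation is the convex-case Schwarz-lemma proof run with $E$ in place of $h''/h'$, and your identity explains why it goes through verbatim.

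What each buys:
\begin{itemize}
\item The paper's argument is self-contained and uses nothing beyond the Schwarz lemma.
\item Your forward direction leans on the cited forward half of the convex theorem, which is legitimate. In exchange it gives a structural reason the generalization holds.
\item Your converse is more careful than the paper's. The paper passes from $\operatorname{Re}J_\alpha[f]\ge 0$ to $f\in M_\alpha$ without addressing the strict inequality, which your minimum-principle step supplies.
\end{itemize}

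A nice consistency check on your construction: $zh'(z)=z^{\alpha}f^{1-\alpha}(f')^{\alpha}$ is exactly the starlike function $g$ in the Miller--Mocanu--Reade representation \eqref{rep_int}. So your $h$ is the Alexander transform of that $g$, and the condition that $h$ be convex is equivalent to $g\in S^*$.
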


The class $M_\alpha$ is not linearly invariant, which makes the usual invariant quantities, such as the order or the norm of the Schwarzian derivative, difficult to compute or estimate. In this paper, we give a complete determination of the order of $M_\alpha$ in terms of $\alpha$, and a partial answer for the Schwarzian norm. Concretely, we establish a sharp coefficient estimate for $f(z) = z + a_2 z^2 +
a_3 z^3 + \cdots$ in $M_\alpha$:
\[|a_3 - a_2^2| \leq \frac{1}{1 + 2\alpha}\left[1 + \left(\frac{|1-\alpha^2| - |1+\alpha|^2}{4}\right)|a_2|^2\right],\]
which refines existing bounds and highlights the influence of $\alpha$ on the coefficients of functions in $M_\alpha$. When $\alpha = 1$, the inequality reduces to the classical bound for $C$. In the final subsection, we compute the Schwarzian norm of the $\alpha$-Koebe function given by \eqref{koebeconvex} for $\alpha = 1/n$, $n \in \mathbb{N}$, and formulate a conjecture for the full class $M_\alpha$.

\section{Results}\label{sec2}

Let $f$ be holomorphic in $\mathbb{D}$, normalized by $f(0) = 0$ and $f'(0) = 1$, with $f(z)f'(z)/z \neq 0$ for all $z \in \mathbb{D}$. For $\alpha \in \mathbb{R}$, we say that $f$ is $\alpha$-convex if it satisfies \eqref{eq:a-convex}, and we write $f \in M_\alpha$. Note that $M_0 = S^*$ and $M_1 = C$. By \cite{MMR1973}, every
function in $M_\alpha$ is univalent and starlike, and $M_\alpha \subseteq C$ whenever $\alpha \geq 1$.

The classes $S^*_\beta$ and $C_\beta$ of starlike and convex functions of order $\beta$ ($0 \leq \beta < 1$) are classical extensions of $S^*$ and $C$. They satisfy $C_\beta \subset S^*_\gamma$, where
\begin{equation}\label{eq:gamma}
\gamma = \gamma(\beta) = \begin{cases}
\dfrac{1-2\beta}{2(2^{1-2\beta}-1)} & \text{if } \beta \neq \dfrac{1}{2}, \\[6pt]
\dfrac{1}{2\ln 2} & \text{if } \beta = \dfrac{1}{2},
\end{cases}
\end{equation}
see \cite{MM2000,GK2003}. When $\beta = 0$ this reduces to the classical inclusion $C \subset S^*_{1/2}$. Notice that from \eqref{eq:a-convex} and the fact that
\( M_\alpha\subseteq S^*_\delta \) (see \cite{MMR1978}) with
\[\delta=\dfrac{\Gamma\left(\frac{1}{2}+\frac{1}{\alpha}\right)}{\sqrt{\pi}\cdot\Gamma\left(1+\frac{1}{\alpha}\right)},\]
we have that
\begin{equation*}
    \text{Re}\left\{1+z\dfrac{f''}{f'}(z)\right\}>\left(\dfrac{\alpha-1}{\alpha}\right)\text{Re}\left\{z\dfrac{f'}{f}(z)\right\}>\left(\dfrac{\alpha-1}{\alpha}\right)\cdot \dfrac{\Gamma\left(\frac{1}{2}+\frac{1}{\alpha}\right)}{\sqrt{\pi}\cdot\Gamma\left(1+\frac{1}{\alpha}\right)}.
\end{equation*}
We have proved the following proposition.
\begin{prop}\label{prop_beta}
    Let $f\in M_\alpha$ with $\alpha\geq 1$, then $f\in C_{\beta}$, where
    \begin{equation*}
     \beta=\left(\dfrac{\alpha-1}{\alpha}\right)\cdot \dfrac{\Gamma\left(\frac{1}{2}+\frac{1}{\alpha}\right)}{\sqrt{\pi}\cdot\Gamma\left(1+\frac{1}{\alpha}\right)}.
    \end{equation*}
\end{prop}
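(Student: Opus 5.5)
The plan is to rewrite the defining inequality \eqref{eq:a-convex} so that the convexity expression $1+zf''/f'$ stands alone, and then bound the remaining starlikeness term with the Miller--Mocanu--Reade inclusion $M_\alpha\subseteq S^*_\delta$ quoted above. Since $\alpha\ge 1>0$, dividing $J_\alpha[f]$ by $\alpha$ gives the pointwise identity
\begin{equation*}
1+z\frac{f''(z)}{f'(z)} \;=\; \frac{1}{\alpha}\,J_\alpha[f](z) \;+\; \frac{\alpha-1}{\alpha}\, z\frac{f'(z)}{f(z)}, \qquad z\in\D .
\end{equation*}
This is legitimate because $f\in M_\alpha$ guarantees $f(z)f'(z)/z\neq 0$.

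Next, take real parts. The first term satisfies $\operatorname{Re}\{J_\alpha[f]\}/\alpha>0$ by hypothesis, so
\begin{equation*}
\operatorname{Re}\left\{1+z\frac{f''}{f'}\right\} \;>\; \frac{\alpha-1}{\alpha}\operatorname{Re}\left\{z\frac{f'}{f}\right\}.
\end{equation*}
The coefficient $(\alpha-1)/\alpha$ is nonnegative exactly because $\alpha\geq 1$. This sign is what allows us to use the lower bound $\operatorname{Re}\{zf'/f\}>\delta$ from \cite{MMR1978}, which gives $\operatorname{Re}\{1+zf''/f'\}>\beta$ with $\beta=\frac{\alpha-1}{\alpha}\delta$. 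When $\alpha=1$ we simply get $\beta=0$ and recover $M_1=C$; there the inequality $\ge$ multiplied by $0$ still leaves the strict inequality coming from the first term.

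Finally, check that $\beta\in[0,1)$, so that $C_\beta$ is a genuine class. We have $\delta\le 1$, since $\delta$ is the order of a starlike function, or directly from log-convexity of $\Gamma$. Together with $(\alpha-1)/\alpha<1$ this gives $0\le\beta<1$.

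There is no real obstacle here; the only point needing care is the sign of $(\alpha-1)/\alpha$, which is why the statement is restricted to $\alpha\ge1$. This plan does not address sharpness, and I do not claim it, because the extremal functions for the two inequalities, $\operatorname{Re}J_\alpha>0$ and $\operatorname{Re}\{zf'/f\}>\delta$, need not coincide at the same point.
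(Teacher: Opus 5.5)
Your proposal is correct and matches the paper's argument: the paper writes $\operatorname{Re}\{1+zf''/f'\}>\frac{\alpha-1}{\alpha}\operatorname{Re}\{zf'/f\}>\frac{\alpha-1}{\alpha}\,\delta$, using $\operatorname{Re}J_\alpha[f]>0$ and the inclusion $M_\alpha\subseteq S^*_\delta$ of Miller, Mocanu, and Reade. Your attention to the sign of $(\alpha-1)/\alpha$ (including the case $\alpha=1$) and your check that $0\le\beta<1$ go slightly beyond what the paper spells out, but change nothing in the argument.
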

The fact that \( M_\alpha \subseteq C \) for all \( \alpha\geq 1\) and that \( C_\beta\subset C=M_1 \) for all \( 0\leq \beta<1 \) allows us to formulate the next question: given \( f\in C_\beta \), does exist \( \alpha=\alpha(\beta) \) such that \( f\in M_\alpha\)? The next proposition gives a positive answer as a direct consequence of \eqref{eq:gamma}.

\begin{prop}\label{prop_alpha}
Let \( f\in C_\beta \) with \(0 \leq \beta <1 \), and  \( \gamma \) is given in \eqref{eq:gamma}. Then \( f\in M_\alpha \) with
\[\alpha=1+\dfrac{\beta}{\gamma-\beta}.\]
\end{prop}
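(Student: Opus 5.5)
The plan is to write $J_\alpha$ as a linear combination of $J_0$ and $J_1$ and choose $\alpha$ so that the two order constants cancel. Set $p(z)=zf'(z)/f(z)$. Then $J_1[f]=p+zp'/p$, and directly from \eqref{eq:a-convex}
\[
J_\alpha[f]=\alpha J_1[f]+(1-\alpha)J_0[f],\qquad
\operatorname{Re}J_\alpha[f]=\alpha\operatorname{Re}J_1[f]-(\alpha-1)\operatorname{Re}p .
\]
First I would check that the proposed exponent satisfies $\alpha=1+\beta/(\gamma-\beta)=\gamma/(\gamma-\beta)\ge 1$. This needs $\gamma(\beta)>\beta$ on $[0,1)$, which follows from \eqref{eq:gamma} by an elementary computation: for instance $\gamma(0)=1/2$, and $\gamma(\beta)\to1$ as $\beta\to1^-$. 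The choice of $\alpha$ is exactly the one that makes $\alpha\beta-(\alpha-1)\gamma=0$, because $\alpha(\gamma-\beta)=\gamma$.

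The formal one-line argument would insert $\operatorname{Re}J_1[f]>\beta$ (the hypothesis $f\in C_\beta$) and $\operatorname{Re}p>\gamma$ (the inclusion $C_\beta\subset S^*_\gamma$ from \eqref{eq:gamma}) to get $\operatorname{Re}J_\alpha[f]>\alpha\beta-(\alpha-1)\gamma=0$. However, the coefficient $-(\alpha-1)$ of $\operatorname{Re}p$ is nonpositive, so the lower bound $\operatorname{Re}p>\gamma$ points the wrong way. This step is the main obstacle, and the plan must replace it with something valid.

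What I would try first is to work pointwise with the alternative form
\[
J_\alpha[f]=p+\alpha\,\frac{zp'}{p}=(1-\alpha)p+\alpha\Bigl(p+\frac{zp'}{p}\Bigr),
\]
and argue by Jack's lemma (equivalently, the Miller--Mocanu admissibility method). Suppose $\operatorname{Re}J_\alpha[f]$ first reaches $0$ on $|z|=r$. At such a point, use the relation $zp'/p=J_1[f]-p$ together with the two constraints $\operatorname{Re}(p+zp'/p)>\beta$ and $\operatorname{Re}p>\gamma$. The aim is to show that $(1-\alpha)\operatorname{Re}p$ cannot exceed $\alpha\operatorname{Re}(p+zp'/p)$. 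For that I would need an upper control on $\operatorname{Re}p$ relative to $\operatorname{Re}J_1[f]$, for example $\operatorname{Re}(zp'/p)\ge -\operatorname{Re}p+\beta$ combined with the extremal case of \eqref{eq:gamma}.

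As a sanity check I would test the extremal function $f$ with $1+zf''/f'=(1+(1-2\beta)z)/(1-z)$, for which $p$ is unbounded near $z=1$. For $\beta=0$ this gives $J_\alpha=(\alpha+1)w-\alpha$ with $\operatorname{Re}w>1/2$, which is nonnegative only if $\alpha\le1$. So I expect that the inequality as literally stated holds only with the sign considerations reversed, or for a restricted range of $\alpha$. Before finalizing, I would verify the claim on this extremal function and adjust the value of $\alpha$ if necessary.
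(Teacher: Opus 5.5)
Your objection to the one-line argument is correct, and that argument is exactly the paper's proof. The paper substitutes $\operatorname{Re}\{zf'/f\}>\gamma$ into $(1-\alpha)\operatorname{Re}\{zf'/f\}$. But $1-\alpha=-\beta/(\gamma-\beta)<0$ whenever $\beta>0$, because $\gamma(\beta)>\beta$. So the substitution gives an upper bound, and the chain ending in $>(1-\alpha)\gamma+\alpha\beta=0$ does not follow.

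The gap in your proposal is the next step. The Jack-lemma/admissibility plan cannot be completed, because the proposition is false for every $\beta\in(0,1)$. It holds only in the trivial case $\beta=0$, $\alpha=1$. Your sanity check at $\beta=0$ cannot reveal this: there the formula gives $\alpha=1$ and $C_0=C=M_1$, so your condition $\alpha\le 1$ is consistent with the statement. The test has to be run with $\beta>0$, and then it settles the question.

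Let $k$ be the extremal function, $1+zk''/k'=(1+(1-2\beta)z)/(1-z)$, and put $p=zk'/k$. For $|z|=1$, $z\neq 1$, one has $\operatorname{Re}J_1[k](z)=1+2(1-\beta)\operatorname{Re}\frac{z}{1-z}=\beta$. Hence, with $\alpha=\gamma/(\gamma-\beta)$,
\[
\operatorname{Re}J_\alpha[k](z)=\alpha\beta-(\alpha-1)\operatorname{Re}p(z)=\frac{\beta}{\gamma-\beta}\bigl(\gamma-\operatorname{Re}p(z)\bigr).
\]
The formula for $\alpha$ is exactly the one that makes $J_\alpha[k](-1)=0$, since $J_1[k](-1)=\beta$ and $p(-1)=\gamma$. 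At any other boundary point where $\operatorname{Re}p>\gamma$, the right-hand side is negative. Since $k$ extends analytically across $|z|=1$ away from $z=1$, $\operatorname{Re}J_\alpha[k]$ is then negative at nearby interior points as well.

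A concrete case: for $\beta=1/2$ we have $k(z)=-\log(1-z)$, $\gamma=1/(2\ln 2)$ and $\alpha\approx 3.259$. At $z=0.95i$ one finds $\operatorname{Re}J_1[k]\approx 0.5256$ and $\operatorname{Re}p\approx 0.7815$, so $\operatorname{Re}J_\alpha[k](0.95i)\approx -0.052<0$.

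More strongly, for each $\beta\in(0,1)$, $\operatorname{Re}p\to+\infty$ as $z\to 1$ along the circle, while $\operatorname{Re}J_1[k]=\beta$ there. So $k\notin M_\alpha$ for every $\alpha>1$, and no adjusted value of $\alpha>1$ rescues the statement. The correct write-up is a counterexample, not a proof.
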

\begin{proof}
    Let \(0\leq \beta <1 \). We know that if \( f\in C_\beta\) then
    \[\text{Re}\left\{1+z\dfrac{f''}{f'}(z)\right\}>\beta.\]
    By \eqref{eq:gamma} we conclude that \( f\in S^*_\gamma \) so that
    \[\text{Re}\left\{z\dfrac{f'}{f}(z)\right\}>\gamma. \]
    If \( \alpha=1+\beta/(\gamma-\beta) \) then
    \begin{align*}
      \text{Re}\left\{(1-\alpha)z\dfrac{f'}{f}(z)+\alpha\left(1+z\dfrac{f''}{f'}(z)\right)\right\}& > (1-\alpha)\gamma+\alpha\beta\\
      & =\left(1-\left(1+\dfrac{\beta}{\gamma-\beta}\right)\right)\gamma+\left(1+\dfrac{\beta}{\gamma-\beta}\right)\beta\\
      & = \dfrac{-\beta\gamma}{\gamma-\beta}+\dfrac{\beta\gamma}{\gamma-\beta}=0.
    \end{align*}
Then the proof is finished.
\end{proof}

Now we give the proof of Theorem~\ref{thm:main}.
\begin{proof}[\textbf{Proof of Theorem~\ref{thm:main}}]
If $f$ satisfies \eqref{main}, then in particular $\operatorname{Re}\{J_\alpha[f](z)\} \geq 0$, so $f \in M_\alpha$. Conversely, suppose $f \in M_\alpha$. Then $J_\alpha[f](z)$ belongs to the Carath\'eodory class $P$ of holomorphic functions with positive real part normalized by $J_\alpha[f](0) = 1$. Every function in $P$
is subordinate to $(1+z)/(1-z)$, so there exists a holomorphic
$\varepsilon \colon \mathbb{D} \to \mathbb{D}$ with $\varepsilon(0) = 0$ such that
\begin{equation}\label{eq:J_subord}
J_\alpha[f]=(1-\alpha)z\dfrac{f'}{f}(z)+\alpha\left(1+z\dfrac{f''}{f'}(z)\right)=\dfrac{1+\varepsilon(z)}{1-\varepsilon(z)},\ z\in \D.
\end{equation}
Subtracting $1$ from both sides of equation \eqref{eq:J_subord} and considering that, by Schwarz lemma, $\varepsilon(z)=z\varepsilon_0(z)$, for some $\varepsilon_0$ holomorphic in $\D$ with $|\varepsilon_0(z)|\leq 1$, we have:
\begin{align*}
J_\alpha[f](z)-1=\dfrac{1+\varepsilon(z)}{1-\varepsilon(z)}-1 & \Leftrightarrow (1-\alpha)\left(z\dfrac{f'}{f}(z)-1\right)+\alpha z\dfrac{f''}{f'}(z)=\frac{2\varepsilon(z)}{1-\varepsilon(z)}\\
& \Leftrightarrow (1-\alpha)\left(\dfrac{zf'(z)-f(z)}{f(z)}\right)+\alpha z\dfrac{f''}{f'}(z)=\dfrac{2z\varepsilon_0(z)}{1-z\varepsilon_0(z)}.
\end{align*}
Since $f(z)\neq 0$ for $z\neq 0$, which is guarantee by the univalence and normalization, we defined $(zf'(z)-f(z))/f(z)=zh(z)$. Then
    \begin{equation*}
        (1-\alpha)zh(z)+\alpha z\dfrac{f''}{f'}(z)=\dfrac{2z\varepsilon_0(z)}{1-z\varepsilon_0(z)}\Leftrightarrow (1-\alpha) h(z)+\alpha\dfrac{f''}{f'}(z)=\dfrac{2\varepsilon_0(z)}{1-z\varepsilon_0(z)}.
    \end{equation*}
    Let $E=E(\alpha,z)=(1-\alpha)h(z)+\alpha (f''/f')(z)$. Then:
    \begin{equation}\label{eq:phi}
      \varepsilon_0(z)=\dfrac{E}{2+zE}.
    \end{equation}
    Since we know that $\lvert\varepsilon_0(z)\rvert\leq 1$, applying this to \eqref{eq:phi}, we obtain:
    \begin{align*}
        \left\lvert\dfrac{E}{2+zE}\right\rvert\leq 1 & \Leftrightarrow \lvert E\rvert^2\leq\lvert 2+zE\rvert^2\\
        & \Leftrightarrow \lvert E\rvert^2\leq 4+\lvert z\rvert^2\cdot\rvert E\lvert^2+4\text{Re}\{zE\}\\
        & \Leftrightarrow (1-\lvert z\rvert^2)\lvert E\rvert^2\leq 4+4\text{Re}\{zE\}.
    \end{align*}
    Notice that $zE=J_\alpha[f](z)-1$, so we have:
    \begin{equation*}
        (1-\lvert z\rvert^2)\lvert E\rvert^2\leq 4+4\text{Re}\{J_\alpha[f](z)-1\}\Leftrightarrow (1-\lvert z\rvert^2)\lvert E\rvert^2\leq 4\text{Re}\{J_\alpha[f](z)\}.
    \end{equation*}
    Finally, we can conclude:
    \begin{equation*}
        \text{Re}\{J_\alpha[f](z)\}\geq\dfrac{1}{4}(1-\lvert z\rvert^2)\cdot\lvert E\rvert^2.
    \end{equation*}
\end{proof}

\subsection{Fekete--Szeg\H o Inequality} The Fekete--Szeg\H o inequality \cite{FS1933} has been extensively studied in geometric function theory. For univalent mappings $f(z)=z+a_2z^2+a_3z^3+\cdots$ it gives $|a_3-a_2^2|\leq 1$, while for convex mappings ($M_1=C$) the classical sharp bound due by S. Trimble in \cite{Tri1975} shows that $|a_3-a_2^2|\leq (1-|a_2|^2)/3$ holds. The following results generalize this inequality to $\alpha$-convex class.

\begin{theo} Let $\alpha>0$ and $f(z)=z+a_2z^2+a_3z^3+\cdots\in M_\alpha$, then $$|a_3-a_2^2|\leq \dfrac{1}{1+2\alpha}\left[1+\left(\dfrac{|1-\alpha^2|-|1+\alpha|^2}{4}\right)|a_2|^2\right].$$ The bound is sharp and holds when, for $0\leq \lambda\leq 1$, $g(z)=\displaystyle \int_0^z\left(\dfrac{1+t}{1-t}\right)^\lambda \dfrac{1}{1-t^2}dt$ in the integral representation given by \eqref{rep_int}.
\end{theo}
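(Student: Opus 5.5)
We will convert the statement into a coefficient problem for a Carath\'eodory function. As in the proof of Theorem~\ref{thm:main}, if $f\in M_\alpha$ then $p(z):=J_\alpha[f](z)=1+c_1z+c_2z^2+\cdots$ belongs to $P$. Moreover, for the representation \eqref{rep_int} one checks that $J_\alpha[f]=zg'/g$. We expand the two ingredients of $J_\alpha[f]$:
\[
\frac{zf'}{f}=1+a_2z+(2a_3-a_2^2)z^2+\cdots,\qquad 1+\frac{zf''}{f'}=1+2a_2z+(6a_3-4a_2^2)z^2+\cdots.
\]
This gives
\[
c_1=(1+\alpha)a_2,\qquad c_2=2(1+2\alpha)a_3-(1+3\alpha)a_2^2 .
\]
Hence
\[
a_3-a_2^2=\frac{c_2-(1+\alpha)a_2^2}{2(1+2\alpha)}=\frac{1}{2(1+2\alpha)}\left(c_2-\frac{c_1^2}{1+\alpha}\right).
\]

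Next we need a sharp estimate for $c_2-\mu c_1^2$ in terms of $|c_1|$. We use the Schwarz-function parametrization already set up in the proof of Theorem~\ref{thm:main}: $p=(1+z\varepsilon_0)/(1-z\varepsilon_0)$ with $\varepsilon_0(z)=b_0+b_1z+\cdots$ and $|\varepsilon_0|\le 1$. Then $c_1=2b_0$ and $c_2=2b_1+2b_0^2$. Schur's lemma (equivalently Schwarz--Pick for $\varepsilon_0$) gives $|b_1|\le 1-|b_0|^2$, i.e.
\[
\left|c_2-\tfrac12 c_1^2\right|\le 2-\tfrac12|c_1|^2 .
\]
We split
\[
c_2-\frac{c_1^2}{1+\alpha}=\Big(c_2-\tfrac12c_1^2\Big)+\frac{\alpha-1}{2(1+\alpha)}\,c_1^2 ,
\]
and apply the triangle inequality:
\[
\left|c_2-\frac{c_1^2}{1+\alpha}\right|\le 2-\frac{(1+\alpha)^2|a_2|^2}{2}+\frac{|\alpha-1|(1+\alpha)}{2}|a_2|^2
=2+\frac{|1-\alpha^2|-|1+\alpha|^2}{2}\,|a_2|^2 .
\]
Dividing by $2(1+2\alpha)$ gives exactly the stated bound.

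For sharpness, equality is needed in both steps. First, $|b_1|=1-|b_0|^2$ forces $\varepsilon_0$ to be a disk automorphism $\varepsilon_0(z)=\eta\,(z+\bar\eta b_0)/(1+\overline{b_0}\eta z)$ with $|\eta|=1$. Second, the two summands $c_2-c_1^2/2=2b_1$ and $c_1^2$ must have equal arguments, with the sign dictated by whether $\alpha\ge1$ or $\alpha<1$. Taking $b_0=\lambda\in[0,1]$ real and $b_1=1-\lambda^2>0$ aligns them when $\alpha\ge 1$; for $\alpha<1$ a rotation or sign change handles the case. Then $zg'/g=p$ determines $g$, and we will verify that $zg'/g$ for the displayed $g(z)=\int_0^z\big(\frac{1+t}{1-t}\big)^\lambda\frac{dt}{1-t^2}$ (suitably interpreted, possibly after replacing $g$ by the starlike function associated with it) has first two coefficients $c_1=2\lambda$ and $c_2=2(1-\lambda^2)+2\lambda^2$. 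This identification of the paper's extremal $g$ with the extremal Schwarz function is the step we expect to be the main obstacle. It requires an explicit computation of $zg'/g$ to second order, and possibly a rotation $g(z)\mapsto \bar e g(ez)$ to get the correct argument alignment in the case $\alpha<1$. The coefficient inequality itself is routine once the identity for $a_3-a_2^2$ and the Schur bound are in place.
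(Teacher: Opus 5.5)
Your proof of the inequality is correct and is in substance the paper's argument. Both reduce to $a_3-a_2^2=\frac{1}{2(1+2\alpha)}\bigl(2b_1+\frac{2(\alpha-1)}{1+\alpha}b_0^2\bigr)$, with $b_0=\varepsilon'(0)$ and $2b_1=\varepsilon''(0)$, and both finish with $|b_1|\le 1-|b_0|^2$ and the triangle inequality. The only difference is how the identity is obtained. You read it off the Taylor expansion of $J_\alpha[f]$ directly. The paper introduces the auxiliary Schwarz function $\omega$ of $zf'/f$ and differentiates the $\omega$--$\varepsilon$ relation twice, so your derivation is shorter and less error-prone.

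The sharpness step, which you leave open, needs two corrections.

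First, the displayed integral is not the starlike function of \eqref{rep_int}. It is the convex function $h$ with $1+zh''/h'=(1+2\lambda z+z^2)/(1-z^2)$. The function to insert in \eqref{rep_int} is $g=zh'=\frac{z}{1-z^2}\bigl(\frac{1+z}{1-z}\bigr)^{\lambda}$. Its $zg'/g$ has $c_1=2\lambda$, $c_2=2$, exactly as you predicted, and this settles the case $\alpha\ge 1$.

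Second, for $0<\alpha<1$ your proposed rotation cannot work. The map $g(z)\mapsto \bar e\,g(ez)$ replaces $p(z)$ by $p(ez)$, so $b_0\mapsto e b_0$ and $b_1\mapsto e^2 b_1$. Thus $b_1$ and $b_0^2$ are multiplied by the same factor and their relative argument never changes; indeed $|a_3-a_2^2|$ is rotation invariant. What does work is your other option, $\eta=-1$. This gives $\varepsilon_0(z)=(\lambda-z)/(1-\lambda z)$, $p(z)=(1-z^2)/(1-2\lambda z+z^2)$, and $g(z)=z/(1-2\lambda z+z^2)$.

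Consequently, for $0<\alpha<1$ and $0<\lambda<1$ the extremal claimed in the statement does not attain equality, even after the reinterpretation above. For example, with $\alpha=\lambda=\tfrac12$ it gives $c_1=1$, $c_2=2$, $a_2=\tfrac23$ and $|a_3-a_2^2|=\tfrac13$. The bound is $\tfrac{5}{12}$, which is attained by $g(z)=z/(1-z+z^2)$.

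You do not actually need any explicit formula. Every $p\in P$ equals $zg'/g$ for $g(z)=z\exp\int_0^z\frac{p(t)-1}{t}\,dt\in S^*$, and \eqref{rep_int} then produces $f\in M_\alpha$ with $J_\alpha[f]=p$. Choosing the automorphism $\varepsilon_0$ with the sign of $\eta$ matched to whether $\alpha\ge1$ or $\alpha<1$ therefore gives equality for every admissible value $|a_2|=2\lambda/(1+\alpha)$, $0\le\lambda\le 1$.
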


\begin{proof}Since $f$ is starlike, we have that $z\dfrac{f'}{f}(z)=\dfrac{1+\omega}{1-\omega}$ with $\omega:\D\to \D$ holomorphic in $\D$ and $\omega(0)=0$, hence $\omega(z)=z\omega_0(z)$. This implies
\begin{equation}\label{preSch}
\dfrac{f''}{f'}=\dfrac{2\omega'}{1-\omega^2}+\dfrac{2\omega_0}{1-\omega}.\end{equation}
However, $f \in M_\alpha$, therefore there exists $\varepsilon:\D\to \D$ holomorphic in $\D$ with $\varepsilon(0)=0$ such that $J_\alpha[f](z)=(1+\varepsilon(z))/(1-\varepsilon(z))$, from which we have that
\begin{equation}\label{omega_epsilon}
  \dfrac{1+\omega}{1-\omega}+\alpha \dfrac{2z\omega'}{1-\omega^2}=\dfrac{1+\varepsilon}{1-\varepsilon},
\end{equation}
Since $\varepsilon(0)=0$, its follows that $\varepsilon(z)=z\varepsilon_0(z)$. Thus, we have that
\begin{equation*} \frac{\omega_0}{1-\omega}+\frac{\alpha\omega'}{1-\omega^2}=\frac{\varepsilon_0}{1-\varepsilon},\end{equation*} and using equation \eqref{omega_epsilon} we have
\begin{eqnarray}\label{alpha-omega}
\dfrac{\alpha \omega'}{1-\omega^2} & = & \dfrac{\varepsilon_0-\omega_0}{(1-\varepsilon)(1-\omega)}.\label{ec1}
\end{eqnarray}
But $\omega'(z)=\omega_0(z)+z\omega_0'(z)$ and $\omega''(z)=2\omega_0'(z)+z\omega_0''(z)$. Thus, using equation \eqref{preSch}, we have that
$$a_3-a_2^2=\frac{1}{2}\omega''(0)-\omega'(0)^2,$$ and by \eqref{ec1} we have that $\alpha\omega'(0)=\varepsilon'(0)-\omega'(0)$ and from this,
$$\omega'(0)=\dfrac{\varepsilon'(0)}{1+\alpha}.$$
Additionally, differentiating \eqref{alpha-omega}, we obtain
$$\alpha\dfrac{\omega''(1-\omega^2)+2\omega\omega'^2}{(1-\omega^2)^2}=\dfrac{(\varepsilon_0'-\omega_0')(1-\varepsilon)(1-\omega)+(\varepsilon_0-\omega_0)\left(\varepsilon'(1-\omega)+\omega'(1-\varepsilon)\right)}{(1-\varepsilon)^2(1-\omega)^2},$$
evaluating at $z=0$ yields
\begin{eqnarray}
\alpha\omega''(0) & = & \varepsilon'(0)-\omega'(0)+(\varepsilon_0-\omega_0)(\varepsilon'(0)+\omega'(0)).\label{ec2}
\end{eqnarray}

But, $\varepsilon''(0)=2\varepsilon_0'(0)$ and $\omega''(0)=2\omega_0'(0)$. Using these relationships in \eqref{ec2}, we get
$$\omega''(0)=\dfrac{\varepsilon''(0)}{1+2\alpha}+\dfrac{2\alpha(2+\alpha)\varepsilon'(0)^2}{(1+\alpha)^2(1+2\alpha)}.$$

Substituting, we have that:
\begin{eqnarray*}
a_3-a_2^2 & = & \frac{1}{2}\left[\dfrac{\varepsilon''(0)}{1+2\alpha}+\dfrac{2\alpha(2+\alpha)\varepsilon'(0)^2}{(1+\alpha)^2(1+2\alpha)}-\dfrac{2\varepsilon'(0)^2}{(1+\alpha)^2}\right]\\
& = & \dfrac{1}{2(1+2\alpha)}\left[\varepsilon''(0)-\dfrac{2(1-\alpha^2)\varepsilon'(0)^2}{(1+\alpha)^2}\right].
\end{eqnarray*}

If we write $\varepsilon(z)=\varepsilon_1z+\varepsilon_2z^2+\cdots=z(\varepsilon_1+\varepsilon_2z+\cdots)$ and using that $|\varepsilon_2|\leq1-|\varepsilon_1|^2$, we have that $|\varepsilon''(0)|\leq 2-2|\varepsilon'(0)|^2$, and with this
\begin{eqnarray*}
|a_3-a_2^2| & \leq & \dfrac{1}{2(1+2\alpha)}\left[2-2|\varepsilon'(0)|^2+\dfrac{2|1-\alpha^2||\varepsilon'(0)|^2}{(1+\alpha)^2}\right]\\
& = & \dfrac{1}{1+2\alpha}\left[1+\left(\left|\dfrac{1-\alpha}{1+\alpha}\right|-1\right)|\varepsilon'(0)|^2\right].
\end{eqnarray*}
Since $\varepsilon'(0)=(1+\alpha)\omega'(0)$ and $f''(0)=4\omega'(0)$ (see Eq.~\eqref{preSch}), we conclude that
$$|a_3-a_2^2|\leq \dfrac{1}{1+2\alpha}\left[1+\left(\dfrac{|1-\alpha^2|-|1+\alpha|^2}{4}\right)|a_2|^2\right].$$ The equality holds if we consider that $$\varepsilon_0(z)=\frac{z+\lambda}{1+\lambda z},\quad 0\leq \lambda\leq 1,$$ which implies that, in the integral representation \eqref{rep_int} the corresponding function $g$ is given by $$g'(z)=\left(\dfrac{1+z}{1-z}\right)^\lambda \cdot \dfrac{1}{1-z^2}$$
\end{proof}

\subsection{The Order of a function.} On the other hand, the \textit{order} of a mapping \( f \) is defined as the supremum of the modulus of the second Taylor coefficient of the function:
\[
\frac{f(\sigma_a(z)) - f(a)}{f'(a)(1 - |a|^2)},
\]
where \( \sigma_a(z) = (z + a)/(1 + \overline{a}z) \) and \( a \in \D \) . This invariant is denoted by \( A_f \) and is given by
\begin{equation*}\label{Af} A_f=\sup_{z\in\mathbb{D}}\left|\frac{(1-|z|^2)}{2}\frac{f''}{f'}(z)-\overline{z}\right|.\end{equation*}
It is not hard to see that $A_f\geq 1$ for any $f$ and $A_f=1$ if and only if $f$ is a convex mappings. Moreover, if $f$ is univalent, then $A_f\leq 2$. The reader can see Pommerencke's classical paper \cite{P1964}.

\begin{theo} Let $f$ in $M_\alpha$ with $\alpha\in [0,1]$, then its order satisfies that $A_f\leq 2-\alpha.$
\end{theo}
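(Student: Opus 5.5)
The plan is to write $f''/f'$ as a convex combination of two quantities whose contributions to $A_f$ we already control, and then use the triangle inequality. From the definition of $J_\alpha[f]$,
\[
\alpha\,\frac{f''}{f'}(z)=E(z)-(1-\alpha)h(z),
\qquad h(z)=\frac{zf'(z)-f(z)}{zf(z)},
\]
where $E=(1-\alpha)h+\alpha f''/f'$ is the quantity used in the proof of Theorem~\ref{thm:main}. Combining the two terms of $A_f$ with weights $\alpha$ and $1-\alpha$ gives the key identity
\[
\frac{1-|z|^2}{2}\frac{f''}{f'}-\bar z
=\alpha\left(\frac{1-|z|^2}{2}\frac{f''}{f'}-\bar z\right)+(1-\alpha)\left(\frac{1-|z|^2}{2}\frac{f''}{f'}-\bar z\right).
\]
This alone is trivial, so the actual decomposition should instead use
\[
\frac{f''}{f'} = \Big(\frac{f''}{f'}-2h\Big)+2h .
\]
Here $h$ is, up to the factor $z$, the logarithmic derivative of $f(z)/z$. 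This suggests comparing $f$ with the auxiliary function $F$ defined by $F'=(f/z)^{\,\cdot}$-type expressions.

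A cleaner route is the integral representation \eqref{rep_int}. Set $F=f^{1/\alpha}$, suitably normalized. Then $\alpha$-convexity of $f$ means $z(F'/\,(f/z)^{1/\alpha-1}\cdots)$. Concretely, one checks that
\[
J_\alpha[f] = 1+z\frac{\phi''}{\phi'},\qquad \phi'(z)=f'(z)\Big(\frac{f(z)}{z}\Big)^{\frac1\alpha-1}\cdot\frac{1}{\cdot},
\]
and more precisely that the function $\Phi$ with $\Phi'=f'\,(f/z)^{(1-\alpha)/\alpha}$ satisfies
\[
\alpha\Big(1+z\frac{\Phi''}{\Phi'}\Big)=J_\alpha[f]+(\alpha-1)\Big(1 - \text{terms}\Big).
\]
The intended outcome is $\alpha\,\Phi''/\Phi' = \alpha f''/f' + (1-\alpha)h = E$. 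So $\Phi$ is convex precisely when $f\in M_\alpha$, and $\Phi$ has order $A_\Phi\le 1$.

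Inverting this relation gives $f''/f' = \Phi''/\Phi' - \frac{1-\alpha}{\alpha}h$. That coefficient blows up as $\alpha\to0$, so we should instead use $f''/f' = E + (1-\alpha)\big(f''/f'-h\big)$. Then
\[
\frac{1-|z|^2}{2}\frac{f''}{f'}-\bar z=\Big(\frac{1-|z|^2}{2}E-\bar z\Big)+(1-\alpha)\frac{1-|z|^2}{2}\Big(\frac{f''}{f'}-h\Big).
\]
The first term has modulus at most $1$. This follows from Theorem~\ref{thm:main}: expanding $|\tfrac{1-|z|^2}{2}E-\bar z|^2\le1$ gives exactly $\frac14(1-|z|^2)|E|^2\le \operatorname{Re}\{1+zE\}=\operatorname{Re}J_\alpha[f]$, since $1-|z|^2-(1-|z|^2)\operatorname{Re}(zE)\cdot$ normalization matches after dividing by $1-|z|^2$. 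It therefore remains to show
\[
\frac{1-|z|^2}{2}\Big|\frac{f''}{f'}-h\Big|\le 1 .
\]
Now $f''/f'-h$ is the pre-Schwarzian of $f$ minus the logarithmic derivative of $f/z$, i.e.\ $\frac{d}{dz}\log\frac{zf'}{f}$. Since $f\in S^*$ by \cite{MMR1978}, we have $zf'/f=p\in P$. The Schwarz–Pick estimate for Carathéodory functions gives $|p'|/\operatorname{Re}p\le 2/(1-|z|^2)$. This yields only $\frac{1-|z|^2}{2}|p'/p|\le1$, because $|p'/p|\le|p'|/\operatorname{Re}p$. That is exactly the needed bound, and combining it with the first term gives $A_f\le 1+(1-\alpha)=2-\alpha$.

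The main obstacle is verifying that the first term is bounded by $1$, i.e.\ that \eqref{main} is equivalent to $|\tfrac{1-|z|^2}{2}E-\bar z|\le1$. This needs care because $zE=J_\alpha-1$ rather than $\bar z$-compatible. I would expand both sides: $|\tfrac{1-|z|^2}{2}E-\bar z|^2=\tfrac{(1-|z|^2)^2}{4}|E|^2-(1-|z|^2)\operatorname{Re}(zE)+|z|^2$. This is $\le1$ iff $\tfrac{1-|z|^2}{4}|E|^2\le 1+\operatorname{Re}(zE)=\operatorname{Re}J_\alpha[f]$, which is precisely Theorem~\ref{thm:main}. The second step is standard.
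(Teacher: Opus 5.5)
Your final argument is correct and is the paper's proof in other coordinates. Your $\frac{1-|z|^2}{2}E-\bar z$ is exactly the paper's $\frac{\varepsilon_0-\bar z}{1-z\varepsilon_0}$ (since $E=2\varepsilon_0/(1-z\varepsilon_0)$), your $p'/p$ is the paper's $2\omega'/(1-\omega^2)$, and your bound via $|p'|\le 2\operatorname{Re} p/(1-|z|^2)$ and $|p|\ge\operatorname{Re} p$ is the Cayley transform of the paper's Schwarz--Pick estimate for $\omega$ together with $|1-\omega^2|\ge 1-|\omega|^2$.

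You should delete the $\Phi$ detour, because its claim that $\Phi$ is convex precisely when $f\in M_\alpha$ is false: one has $1+z\Phi''/\Phi'=1+(J_\alpha[f]-1)/\alpha$, so convexity of $\Phi$ means $\operatorname{Re} J_\alpha[f]>1-\alpha$. Nothing in your final argument uses it.
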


\begin{proof} Subtracting $1$ from both sides of equation \eqref{omega_epsilon} we obtain
\begin{equation*}
    \dfrac{2\omega}{1-\omega}+\dfrac{2\alpha z\omega'}{1-\omega^2}=\dfrac{2\varepsilon}{1-\varepsilon}\,\Leftrightarrow\,\dfrac{2\omega_0}{1-\omega}=\dfrac{2\varepsilon_0}{1-\varepsilon}-\dfrac{2\alpha\omega'}{1-\omega^2}.
\end{equation*}
Replacing the right side of the last equation in equation \eqref{preSch}, we have
\begin{equation*}
    \dfrac{f''}{f'}=\dfrac{2\omega'}{1-\omega^2}+\dfrac{2\varepsilon_0}{1-\varepsilon}-\dfrac{2\alpha\omega'}{1-\omega^2}=\dfrac{2\varepsilon_0}{1-\varepsilon}+\dfrac{2(1-\alpha)\omega'}{1-\omega^2}.
\end{equation*}
Then (here $\varepsilon_0$, $\omega$, and $\omega'$ are evaluating at $z$)
\begin{align*}
\dfrac{1}{2}(1-|z|^2)\dfrac{f''}{f'}(z)-\overline{z} & =  (1-|z|^2)\left(\dfrac{\varepsilon_0}{1-\varepsilon}+(1-\alpha)\dfrac{\omega'}{1-\omega^2}\right)-\overline{z}\\
& =  \dfrac{\varepsilon_0-\overline{z}}{1-z\varepsilon_0}+(1-\alpha)\dfrac{\omega'(1-|z|^2)}{1-\omega^2}.
\end{align*}
Applying Schwarz-Pick lemma and the fact that $|(\varepsilon_0-\overline{z})/(1-z\varepsilon_0)|<1$ for all $|z|<1$ we obtain
\begin{align*}
    \left|\dfrac{1}{2}(1-|z|^2)\dfrac{f''}{f}(z)-\overline{z}\right| & \leq \left|\dfrac{\varepsilon_0-\overline{z}}{1-z\varepsilon_0}\right|+(1-\alpha)\left|\dfrac{\omega'(1-|z|^2)}{1-\omega^2}\right|\\
    & \leq 1+(1-\alpha)(1-|z|^2)\dfrac{|\omega'|}{1-|\omega|^2}\\
    & \leq 1+(1-\alpha)\dfrac{1-|z|^2}{1-|\omega|^2}\cdot\dfrac{1-|\omega|^2}{1-|z|^2}=1+(1-\alpha)=2-\alpha.
\end{align*}
Thus, taking supremum, we have that $A_f(z)\leq 2-\alpha$, which completes the proof.
\end{proof}

\subsection{Schwarzian Derivative} The Schwarzian derivative of a locally univalent mapping \( f \) is
\[S_f(z) = \left( \frac{f''(z)}{f'(z)} \right)' - \frac{1}{2} \left( \frac{f''(z)}{f'(z)} \right)^2.\]
This is one of the most thoroughly studied invariant in complex analysis; see \cite{CDO2011} and the references therein. Since every function in $M_\alpha$ is univalent, its Schwarzian norm satisfies
\[\|Sf\| = \sup_{z \in \mathbb{D}} |Sf(z)|(1-|z|^2)^2 \leq 6.\]
We seek a sharper bound in terms of $\alpha$. As we mentioned in the introduction, if $\alpha \geq 1$, then $M_\alpha \subset S^\ast_\delta$ and $M_\alpha\subset C_\beta$, where $\beta=(1-1/\alpha)\delta(\alpha)$ and $\delta(\alpha)$ is the increasing function given by
\begin{equation}\label{delta}\delta(\alpha):=\frac{\Gamma\left(\frac12+\frac1\alpha\right)}{\sqrt{\pi}\cdot \Gamma(1+\frac1\alpha)}.\end{equation}
It is not hard to see that $0\leq \delta(\alpha)\leq 1$ for all $\alpha\geq1$.

\begin{theo} Let $\alpha\geq 0$ and $f\in M_\alpha$, then $\|Sf\|\leq s(\alpha)$, where
\begin{equation}\label{def-s} s(\alpha)=\left\{\begin{array}{lr}6, & 0\leq \alpha\leq \alpha_0\\[0.3cm]
\dfrac2\alpha\left(2-\alpha^2\right), & \alpha_0<\alpha<1.\\[0.3cm]
2, & 1\leq \alpha\leq \alpha_1.\\[0.3cm]
8\left(1-\frac1\alpha\right)\delta(\alpha)\left(1-\left(1-\frac1\alpha\right)\delta(\alpha)\right), & \alpha>\alpha_1.\\[0.3cm]
\end{array}\right.\end{equation}
Here $\alpha_0=(\sqrt{17}-3)/2\sim 0.56$ and $\alpha_1$ is the number that satisfies $\left(1-\frac{1}{\alpha_1}\right)\delta(\alpha_1)=1/2$ ($\alpha_1\sim 3.27$).
\end{theo}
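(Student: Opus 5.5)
The bound is the minimum of several estimates, each valid on part of the range of $\alpha$, and the piecewise definition of $s(\alpha)$ records which one wins. I would prove the four cases separately.

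\emph{Case $0\le\alpha\le\alpha_0$.} Here $s(\alpha)=6$. Since $M_\alpha\subset S^*$, every $f\in M_\alpha$ is univalent, and Kraus' theorem gives $\|Sf\|\le 6$.

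\emph{Case $1\le\alpha\le\alpha_1$.} Mocanu's inclusion $M_\alpha\subseteq C$ holds. The classical estimate $\|Sf\|\le 2$ for convex maps then gives the claim.

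\emph{Case $\alpha>\alpha_1$.} By Proposition~\ref{prop_beta}, $f\in C_\beta$ with $\beta=(1-\tfrac1\alpha)\delta(\alpha)$. I would invoke the known sharp bound for convex functions of order $\beta$, namely $\|Sf\|\le 8\beta(1-\beta)$ when $\beta\ge 1/2$, and $\|Sf\|\le 2$ when $\beta\le 1/2$. This gives exactly the last branch of \eqref{def-s}. The switch occurs at $\beta=1/2$, which is the defining equation of $\alpha_1$. Since $\beta(\alpha)$ increases with $\alpha$, we have $\beta\ge 1/2$ precisely for $\alpha\ge\alpha_1$.

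If this bound cannot simply be cited, I would derive it. Write $1+zf''/f'=\beta+(1-\beta)p$ with $p$ in the Carath\'eodory class $P$, so that $f''/f'=2(1-\beta)\omega_0/(1-\omega)$. Then compute
\[
S_f=\left(\frac{f''}{f'}\right)'-\tfrac12\left(\frac{f''}{f'}\right)^2 .
\]
Finally bound this using the Schwarz--Pick inequalities for $\omega_0$ and $\omega$, and maximize the resulting expression in $|z|$ and $|\omega_0|$.

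\emph{Case $\alpha_0<\alpha<1$.} This is the genuinely new estimate and the main obstacle. I would use the representation from the proof of the order theorem,
\[
\frac{f''}{f'}=\frac{2\varepsilon_0}{1-\varepsilon}+\frac{2(1-\alpha)\omega'}{1-\omega^2}.
\]
The first term is the pre-Schwarzian of a convex map. The second is $(1-\alpha)$ times $\log$-derivative data of a starlike map. Differentiating gives
\[
S_f=S_1+(1-\alpha)S_2+\text{cross terms},
\]
where $S_1$ is the convex Schwarzian built from $\varepsilon_0$ and $S_2$ involves $\omega''/(1-\omega^2)$ and $\omega\omega'^2/(1-\omega^2)^2$. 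Alternatively, I would eliminate $\omega'$ using \eqref{ec1}, $\alpha\omega'/(1-\omega^2)=(\varepsilon_0-\omega_0)/((1-\varepsilon)(1-\omega))$, which explains the $1/\alpha$ factor in $2(2-\alpha^2)/\alpha$.

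Each term would then be bounded by Schwarz--Pick for $\varepsilon_0$ and $\omega$, together with the higher-derivative estimate $(1-|z|^2)^2|\omega''|\le 2(1+|z|)\,(1-|\omega|^2)$ (up to the standard constant) after multiplying by $(1-|z|^2)^2$. The aim is a bound of the form $\tfrac{2}{\alpha}(2-\alpha^2)$.

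The crossover with $6$ is exactly where $2(2-\alpha^2)/\alpha=6$, i.e.\ $\alpha^2+3\alpha-2=0$, whose positive root is $\alpha_0=(\sqrt{17}-3)/2$. This confirms that the regimes split consistently. It also checks that the bound tends to $2$ as $\alpha\to1$.

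The hardest step will be organizing the cross terms so that the suprema over $|z|$ and over the free parameters of $\varepsilon_0,\omega$ combine to this clean constant. I would first try taking $z$ real and $\varepsilon_0,\omega$ extremal (disk automorphisms or rotations) to guess where the maximum is attained, and then prove the inequality term by term with the triangle inequality.
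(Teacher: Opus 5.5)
Your treatment of $0\le\alpha\le\alpha_0$ (Kraus), $1\le\alpha\le\alpha_1$ (the bound $2$ for convex maps) and $\alpha>\alpha_1$ (Proposition~\ref{prop_beta} plus Suita's bound $8\beta(1-\beta)$ on $C_\beta$) is what the paper does. Your remark that $\beta(\alpha)=(1-\frac1\alpha)\delta(\alpha)$ is increasing, which locates the switch, is a useful addition.

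The gap is the case $\alpha_0<\alpha<1$, the only part of the theorem that is not a citation. You give a search strategy rather than an argument, and the strategy as described cannot reach $\frac{2}{\alpha}(2-\alpha^2)$. Starting from $f''/f'=2\varepsilon_0/(1-\varepsilon)+2(1-\alpha)\omega'/(1-\omega^2)$, the Schwarzian contains the terms $2(1-\alpha)\omega''/(1-\omega^2)$ and $-4(1-\alpha)\frac{\varepsilon_0}{1-\varepsilon}\cdot\frac{\omega'}{1-\omega^2}$. After multiplying by $(1-|z|^2)^2$:
\begin{itemize}
\item your higher-order Schwarz--Pick estimate bounds the $\omega''$ term only by $4(1-\alpha)(1+|z|)$;
\item $(1-|z|^2)|\varepsilon_0|/|1-z\varepsilon_0|$ can be close to $2$ (take $\varepsilon_0\equiv1$, $z\to1$), so the triangle inequality bounds the cross term only by about $8(1-\alpha)$.
\end{itemize}
The resulting estimate is therefore no better than $2+16(1-\alpha)$. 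This exceeds $\frac{2}{\alpha}(2-\alpha^2)$ on all of $(\alpha_0,1)$. To get the stated constant, no second derivative of $\omega$ and no cross term may survive.

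The missing idea is an exact identity. Eliminate $\omega'$ through $\alpha\omega'/(1-\omega^2)=(\varepsilon_0-\omega_0)/((1-\varepsilon)(1-\omega))$, as in your ``alternative''. This gives $f''/f'=\frac{2}{\alpha}(A-(1-\alpha)B)$ with $A=\varepsilon_0/(1-\varepsilon)$ and $B=\omega_0/(1-\omega)$. Because $\varepsilon=z\varepsilon_0$ and $\omega=z\omega_0$, one has
\[
A'=\frac{\varepsilon_0'}{(1-\varepsilon)^2}+A^2,\qquad B'=\frac{\omega_0'}{(1-\omega)^2}+B^2 .
\]
All quadratic terms then collapse to $-\frac{2(1-\alpha)}{\alpha^2}(A-B)^2$, and $A-B=\alpha\omega'/(1-\omega^2)$. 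Hence
\[
Sf=\frac{2}{\alpha}\left(\frac{\varepsilon_0'}{(1-\varepsilon)^2}-(1-\alpha)\frac{\omega_0'}{(1-\omega)^2}\right)-2(1-\alpha)\left(\frac{\omega'}{1-\omega^2}\right)^2 ,
\]
which involves only first derivatives of $\varepsilon_0$, $\omega_0$ and $\omega$.

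Multiplied by $(1-|z|^2)^2$, each of the three pieces is at most $1$ in modulus, by Schwarz--Pick applied to $\varepsilon_0$, $\omega_0$ and $\omega$ respectively. For instance, $(1-|z|^2)^2|\varepsilon_0'|/|1-z\varepsilon_0|^2\le(1-|z|^2)(1-|\varepsilon_0|^2)/|1-z\varepsilon_0|^2\le1$, and for the last piece use $|1-\omega^2|\ge1-|\omega|^2$. Summing gives $\frac{2}{\alpha}(2-\alpha)+2(1-\alpha)=\frac{2}{\alpha}(2-\alpha^2)$. This is the paper's argument; without this cancellation your proof of the second branch is not complete.
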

\begin{proof} For $\alpha\in[0,1]$ we have that $f\in S^\ast$ then $zf'(z)/f(z)=(1+\omega(z))/(1-\omega(z))$ and
\begin{equation}\label{conv-omega}1+z\frac{f''}{f'}(z)=\frac{z\omega'(z)}{1+\omega(z)}+\frac{z\omega'(z)}{1-\omega(z)}+\frac{1+\omega(z)}{1-\omega(z)}.\end{equation}
Also $J_\alpha[f](z)=(1+\varepsilon(z))/(1-\varepsilon(z))$, therefore
\begin{equation}\label{conv-eps}1+z\frac{f''}{f'}(z)=\frac1\alpha\cdot \frac{1+\varepsilon(z)}{1-\varepsilon(z)}-\frac{1-\alpha}{\alpha}\cdot\frac{1+\omega(z)}{1-\omega(z)}.\end{equation}
Recall that $\varepsilon(z)=z\varepsilon_0(z)$ and $\omega(z)=z\omega_0(z)$ as before. Thus, (we omitted the argument $(z)$), $$\frac{f''}{f'}=\frac2\alpha\left(\frac{\varepsilon_0}{1-\varepsilon}-\frac{(1-\alpha)\omega_0}{1-\omega}\right).$$ Then,
$$ Sf=\frac{2}{\alpha}\left(\frac{\varepsilon_0'+\varepsilon_0^2}{(1-\varepsilon)^2}-(1-\alpha)\frac{\omega_0'+\omega_0^2}{(1-\omega)^2}\right)-\dfrac{2}{\alpha^2}\left(\dfrac{\varepsilon_0}{1-\varepsilon}-\dfrac{(1-\alpha)\omega_0}{1-\omega}\right)^2,$$
which is equivalent to $$Sf=\frac{2}{\alpha}\left(\frac{\varepsilon_0'}{(1-\varepsilon)^2}-(1-\alpha)\frac{\omega_0'}{(1-\omega)^2}\right)-\dfrac{2(1-\alpha)}{\alpha^2}\left(\dfrac{\varepsilon_0}{1-\varepsilon}-\dfrac{\omega_0}{1-\omega}\right)^2.$$
Using equation \eqref{omega_epsilon}, we have that $$Sf=\frac{2}{\alpha}\left(\frac{\varepsilon_0'}{(1-\varepsilon)^2}-(1-\alpha)\frac{\omega_0'}{(1-\omega)^2}\right)-2(1-\alpha)\left(\dfrac{\omega'}{1-\omega^2}\right)^2.$$
Using Schwarz-Pick Lemma and $1-|\varphi_g(z)|^2=(1-|g(z)|^2)(1-|z|^2)/|1-zg(z)|^2$, where $\varphi_g(z)=(\overline z-g(z))/(1-zg(z))$, we have that
\begin{equation}\label{bound-sf}\begin{array}{lll}
 |Sf(z)|(1-|z|^2)^2 &\leq & \dfrac{2}{\alpha}\left(1-|\varphi_{\varepsilon_0}(z)|^2+(1-\alpha)(1-|\varphi_{\omega_0}(z)|^2)\right)+2(1-\alpha)\left(\dfrac{|\omega'|(1-|z|^2)}{1-|\omega|^2}\right)^2\\[0.3cm]
  &\leq  & \dfrac{2}{\alpha}\left(1+1-\alpha\right)+2(1-\alpha)= \dfrac{2}{\alpha}\left(2-\alpha^2\right).
\end{array} \end{equation}
Note that $|Sf(z)|(1-|z|^2)^2 \leq 6$ whenever $\alpha \geq (\sqrt{17}-3)/2 \approx 0.56$, then the first two part of the $s(\alpha)$ has been proved. On the other hand, when $\alpha\geq 1$, we have that $f\in C_\beta$, given by $\beta=(1-1/\alpha)\delta(\alpha)$. In fact, Suita in \cite{Sui1996} proved that, for any $f\in C_\beta$, then the norm of the Schwarzian derivative is bounded by 2 when $0\leq \beta\leq 1/2$, and $8\beta(1-\beta)=8\left(1-\frac1\alpha\right)\delta(\alpha)\left(1-\left(1-\frac1\alpha\right)\delta(\alpha)\right)$ when $1/2<\beta<1$. As $\beta=1/2$ occur when $\left(1-\frac{1}{\alpha_0}\right)\delta(\alpha_0)=1/2$, the proof is completed.
\end{proof}

\begin{coro} For $0<\alpha<1$, we have that
\begin{equation*}|Sf(z)|(1-|z|^2)^2+ \frac{2}{\alpha}\left|\frac{\overline z-\varepsilon_0(z)}{1-\varepsilon(z)}\right|^2+(1-\alpha)\left|\frac{\overline z-\omega_0(z)}{1-\omega(z)}\right|^2\leq \dfrac{2}{\alpha}(2-\alpha^2).\end{equation*}
\end{coro}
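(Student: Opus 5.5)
The plan is to rerun the estimate \eqref{bound-sf} from the proof of the preceding theorem, but to keep the terms $|\varphi_{\varepsilon_0}(z)|^2$ and $|\varphi_{\omega_0}(z)|^2$ instead of bounding them by $0$. Throughout, $\varphi_g(z)=(\overline z-g(z))/(1-zg(z))$.

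First, I will start from the expression for $Sf$ derived there:
\[
Sf=\frac{2}{\alpha}\left(\frac{\varepsilon_0'}{(1-\varepsilon)^2}-(1-\alpha)\frac{\omega_0'}{(1-\omega)^2}\right)-2(1-\alpha)\left(\frac{\omega'}{1-\omega^2}\right)^2 .
\]
Since $1-\varepsilon=1-z\varepsilon_0$ and $1-\omega=1-z\omega_0$, the Schwarz--Pick bound $|g'|(1-|z|^2)\le 1-|g|^2$ can be applied to $g=\varepsilon_0$ and $g=\omega_0$. Combined with the identity $1-|\varphi_g|^2=(1-|g|^2)(1-|z|^2)/|1-zg|^2$, this gives
\[
\frac{|g'(z)|(1-|z|^2)^2}{|1-zg(z)|^2}\le 1-|\varphi_g(z)|^2 .
\]
For the last term, Schwarz--Pick applied to $\omega$ gives $|\omega'|(1-|z|^2)/(1-|\omega|^2)\le 1$. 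Since $|1-\omega^2|\ge 1-|\omega|^2$, that term contributes at most $2(1-\alpha)$.

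Next, I multiply by $(1-|z|^2)^2$ and use $0<\alpha<1$, so that $1-\alpha>0$ and every coefficient is nonnegative. This yields
\[
|Sf(z)|(1-|z|^2)^2\le \frac{2}{\alpha}\bigl(1-|\varphi_{\varepsilon_0}|^2\bigr)+\frac{2(1-\alpha)}{\alpha}\bigl(1-|\varphi_{\omega_0}|^2\bigr)+2(1-\alpha).
\]
Moving the $\varphi$-terms to the left-hand side, the constants add up to $\frac{2}{\alpha}(2-\alpha)+2(1-\alpha)=\frac{2}{\alpha}(2-\alpha^2)$.

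Finally, to reach the stated form, note that $\frac{2(1-\alpha)}{\alpha}\ge 1-\alpha$ for $0<\alpha<1$, so the coefficient of $|\varphi_{\omega_0}|^2$ may be decreased to $1-\alpha$. The stated inequality uses the denominators $1-\varepsilon$ and $1-\omega$, which coincide with $1-z\varepsilon_0$ and $1-z\omega_0$, so its second and third terms are exactly $\frac{2}{\alpha}|\varphi_{\varepsilon_0}|^2$ and $(1-\alpha)|\varphi_{\omega_0}|^2$.

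The only delicate point is the pointwise refinement of Schwarz--Pick in the form $|g'|(1-|z|^2)^2/|1-zg|^2\le 1-|\varphi_g|^2$, together with the correct matching of denominators. Once that is in place, the rest is the bookkeeping already carried out in \eqref{bound-sf}.
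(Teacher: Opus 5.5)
Your proposal is correct and follows the paper's own argument: the paper obtains the corollary by rearranging inequality \eqref{bound-sf} while keeping the terms $|\varphi_{\varepsilon_0}(z)|^2$ and $|\varphi_{\omega_0}(z)|^2$, which is exactly what you do. You also make explicit two steps the paper leaves unstated, namely $|1-\omega^2|\ge 1-|\omega|^2$ and the lowering of the coefficient of $|\varphi_{\omega_0}(z)|^2$ from $\frac{2(1-\alpha)}{\alpha}$ to $1-\alpha$; both are valid for $0<\alpha<1$.
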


\begin{proof} From the proof of the previous Theorem, using inequality \eqref{bound-sf} we can rearrange to obtain this.
\end{proof}

\subsubsection{The Schwarzian derivative of $\alpha$-Koebe function}

We end this work by showing an example associated with the integral representation of any given function in $M_\alpha$ for $\alpha>0$. In \cite{MMR1974} the authors shows that $f\in M_\alpha$ ($\alpha>0$) if and only if there exists $g\in S^*$ such that
\begin{equation}
    f(z)=\left[\dfrac{1}{\alpha}\int_0^z\dfrac{(g(\xi))^{1/\alpha}}{\xi}\,d\xi\right]^\alpha.
\end{equation}
This integral representation allows us to construct $\alpha$-convex functions given any starlike function. In particular, if $k(z)=z/(1-z)^2$ ($z\in \D$) is the Koebe function we obtain the $\alpha$-convex Koebe function
\begin{equation}
    k_\alpha(z)=\left[\dfrac{1}{\alpha}\int_0^z \xi^{1/\alpha-1}(1-\xi)^{-2/\alpha}\,d\xi\right]^{\alpha}.
\end{equation}
It is well known that the Koebe function maximizes many important geometric properties in certain classes of univalent functions; in particular, $k_\alpha$ yields sharp growth estimates in $M_\alpha$ \cite{M1973}.

From an algebraic point of view in equation \eqref{rep_int}, let $G(z)=\int_0^z(g(\xi))^{1/\alpha}\cdot\xi^{-1}\,d\xi$, then $f(z)=[(1/\alpha)\cdot G(z)]^\alpha$ then
\begin{equation*}
    \dfrac{f''}{f'}(z)=(\alpha-1)\dfrac{G'}{G}(z)+\dfrac{G''}{G'}(z).
\end{equation*}
Thus, we can obtain an expression for the Schwarzian derivative in terms of $G$
\begin{equation}\label{schwarz-G}
    Sf(z)=\left(\dfrac{f''}{f'}(z)\right)'-\dfrac{1}{2}\left(\dfrac{f''}{f'}(z)\right)^2=SG(z)-\dfrac{(\alpha-1)^2}{2}\left(\dfrac{G'}{G}(z)\right)^2.
\end{equation}
Notice that $G$ is not necessarily locally univalent in $\D$. Now if we let $n=1/\alpha$ for $n\in\mathbb{N}$ with $n\geq 2$, then \eqref{koebeconvex} can be expressed as
\begin{equation}\label{koebeconvex2}
    k_n(z)=\left[n\int_0^z \xi^{n-1}(1-\xi)^{-2n}\,d\xi\right]^{1/n}=[n\text{B}(z;n,1-2n)]^{1/n},
\end{equation}
where $B(z;a,b)$ is the incomplete beta function. Notice that the function in the integrand is rational so if we integrate, the expression \eqref{koebeconvex2} can also be represented as
\begin{equation*}
    k_n(z)=\left[n\sum_{k=0}^{n-1}(-1)^k\binom{n-1}{k}\left(\dfrac{1-(1-z)^{k-2n+1}}{k-2n+1}\right)\right]^{1/n}.
\end{equation*}
In this case $G(z)=B[z;n,1-2n]$ and apply \eqref{schwarz-G} with $\alpha=1/n$, we obtain
\begin{equation}\label{Schwarz-k}
    Sk_n(z)=\dfrac{1-n}{2z^2}\left[1+n\left(\dfrac{1+z}{1-z}\right)^2\right]+\left(\dfrac{n^2-1}{2n^2}\right)\left(\dfrac{G'}{G}(z)\right)^2.
\end{equation}
It is not hard to see that $Sk_n(0)=6n(n-1)/((n+2)(n+1))$, then $$\sup_{f\in M_\alpha}\|Sf\|\geq \frac{6n(n-1)}{(n+2)(n+1)}.$$

For $\alpha = 1/n$, the expression $Sk_n$ is a rational function of $z$ and $G(z)$. This allows us to compute the Schwarzian norm numerically for each $n$. For $n = 1, \ldots, 9$, a direct computation (using Mathematica) yields
\begin{equation}\label{schwarz-k-n}\|Sk_n\|=\sup_{z\in\D}|Sk_n(z)|(1-|z|^2)^2=2\left(3-\frac1n\right)\left(1-\frac1n\right), \quad \quad n=1,\ldots, 9.\end{equation}
Numerical evidence for $10 \leq n < 50$ is consistent with this formula, as Figure~\ref{fig1} illustrates. Thus, we conjectured that $$\sup_{f\in M_\alpha}\|Sf\|=2\left(3-\frac1n\right)\left(1-\frac1n\right).$$

\begin{figure}[h]
\centering
\includegraphics[width=\textwidth]{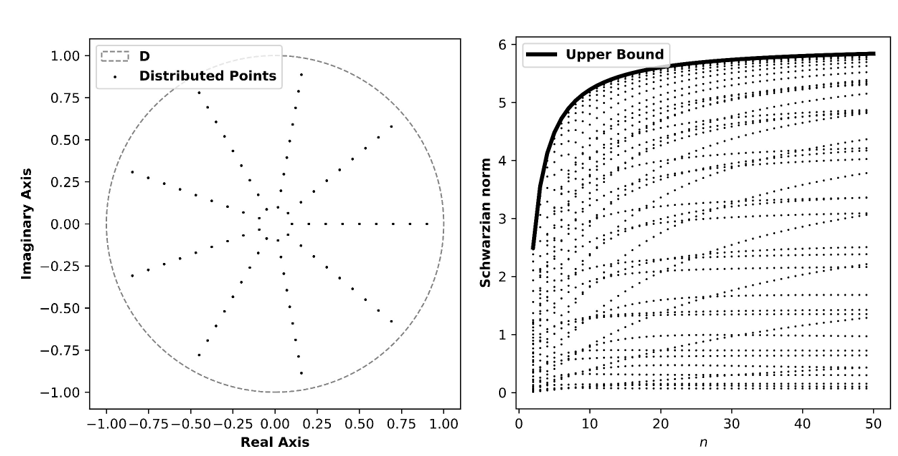}
\caption{(Left) Points in $\D$. (Right) $|Sk_n(z)|(1-|z|^2)^2$.}
\label{fig1}
\end{figure}

\end{document}